\newcommand{\floor}[1]{\lfloor{#1}\rfloor}
\newcommand{\ceil}[1]{\lceil{#1}\rceil}
\DeclareMathOperator{\td}{td}
\DeclareMathOperator{\tw}{tw}
\DeclareMathOperator{\pw}{pw}
\DeclareMathOperator{\rpw}{rpw}
\newtheorem{theorem}{Theorem}
\newtheorem{lemma}[theorem]{Lemma}
\newtheorem{proposition}[theorem]{Proposition}
\newtheorem{conjecture}[theorem]{Conjecture}
\begin{document}

\title{\bf Circumference and Pathwidth \\ of Highly Connected Graphs}

\author{Emily A. Marshall\footnote{Department of Mathematics, Vanderbilt University, Nashville, Tennessee, U.S.A. (\texttt{emily.a.marshall@vanderbilt.edu}). Research supported by the National Science Foundation under Grant No. 1310758} \qquad David R. Wood\footnote{School of Mathematical Sciences, Monash University, Melbourne, Australia (\texttt{david.wood@monash.edu}). Research supported by the Australian Research Council. }}

\maketitle

\begin{abstract}
Birmele [\emph{J.\ Graph Theory}, 2003] proved that every graph with circumference $t$ has treewidth at most $t-1$. Under the additional assumption of 2-connectivity, such graphs have bounded pathwidth, which is a qualitatively stronger conclusion.  Birmele's theorem was extended by Birmele, Bondy and Reed [\emph{Combinatorica}, 2007]  who showed that every graph without $k$ disjoint cycles of length at least $t$ has treewidth $\mathcal{O}(tk^2)$. Our main result states that, under the additional  assumption of $(k+1)$-connectivity, such graphs have bounded pathwidth. In fact, they have pathwidth $\mathcal{O}(t^3+tk^2)$.  Moreover, examples show that $(k+1)$-connectivity is required for bounded pathwidth to hold.  These results suggest the following general question: for which values of $k$ and graphs $H$ does every $k$-connected $H$-minor-free graph have bounded pathwidth? We discuss this question and provide a few observations. 
\end{abstract}

\section{Introduction}

Birmele~\cite{birmele} proved that every graph with circumference $t$ has treewidth at most $t-1$, and this bound is tight for the complete graph $K_t$. Ne\v{s}et\v{r}il and Ossona de Mendez \cite[page 118]{nesetril} showed that under the additional assumption of 2-connectivity, such graphs have treedepth at most $1+(t-2)^2$.  Since pathwidth is at most treedepth minus 1, every 2-connected graph with circumference $t$ has pathwidth at most $(t-2)^2$.  Our first result strengthens this bound.

\begin{theorem}
\label{thm:2-conn}
Every 2-connected graph with circumference $t$ has pathwidth at most $\floor{\tfrac{t}{2}}(t-1)$.
\end{theorem}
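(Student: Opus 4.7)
The plan is to fix a longest cycle $C=v_1v_2\cdots v_cv_1$ of $G$, where $c\le t$, and build a path decomposition whose spine traces $C$.

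The key preliminary observation is the following: for every \emph{bridge} $B$ of $C$ (a chord, or a component of $G-V(C)$ together with its attachments $A(B)\subseteq V(C)$), and every pair $a,a'\in A(B)$, every path in $B$ from $a$ to $a'$ has length at most $\lfloor c/2\rfloor$. Indeed, such a path together with the longer of the two arcs of $C$ between $a$ and $a'$ forms a cycle in $G$ of length at most $c$; since the longer arc has length at least $\lceil c/2\rceil$, the path must have length at most $\lfloor c/2\rfloor\le\lfloor t/2\rfloor$. In particular, every internal vertex of a bridge lies within ``bridge distance'' $\lfloor t/2\rfloor$ from $V(C)$.

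Using this, I would construct $c-1$ bags $B_1,\ldots,B_{c-1}$ as follows. Put $v_1$ in every bag (to cover the closing edge $v_cv_1$); put $v_i$ and $v_{i+1}$ in $B_i$ (to cover the cycle edge $v_iv_{i+1}$); and for each bridge $B$, let $\mathrm{span}(B)$ be the shortest arc of $C$ containing $A(B)$ and place every vertex of $V(B)$ in exactly the bags whose positions lie in $\mathrm{span}(B)$. Verifying that this is a valid path decomposition is a routine check: cycle edges are covered by construction, each chord or interior edge of a bridge is covered because the bridge's vertex set is placed in a contiguous range of bags, and each vertex appears in a contiguous range by definition.

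The heart of the proof is bounding $|B_i|\le\lfloor t/2\rfloor(t-1)+1$. Bag $B_i$ consists of $\{v_1,v_i,v_{i+1}\}$ together with all vertices of bridges whose span contains position $i$. The preliminary observation caps each individual bridge, but the main obstacle is that many bridges may straddle a single position. I expect to handle this by a charging argument: assign each internal bridge-vertex to a shortest path inside its bridge from a fixed attachment; use that distinct bridges have disjoint interiors; and exploit that the spans of these bridges, together with the arcs of $C$ they avoid, form cycles of length at most $c$ and therefore collectively fit within the $c-1\le t-1$ positions available along $C$. The factor $\lfloor t/2\rfloor$ in the final bound reflects the per-path cap from the preliminary observation, while the $(t-1)$ factor reflects the number of positions along the cycle.
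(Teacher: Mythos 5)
The decomposition you describe does not have bounded bags, and the ``charging argument'' you defer to cannot exist, because the quantity it would have to bound is genuinely unbounded. Your preliminary observation controls the \emph{length of paths} inside a bridge, not the \emph{number of bridge vertices} assigned to a bag, and both sources of blow-up occur. First, unboundedly many bridges can straddle one position: take $C=v_1v_2\cdots v_tv_1$ and add vertices $x_1,\dots,x_N$, each adjacent to $v_1$ and $v_3$. This graph is $2$-connected with circumference $t$ (for $t\ge 4$) and $C$ is a longest cycle, yet each $x_j$ is a separate bridge with attachments $\{v_1,v_3\}$ and span $\{1,2\}$, so your bag $B_1$ receives all $N$ of them; the graph has pathwidth about $3$, so the theorem holds, but your decomposition does not witness it. Second, a single bridge can itself have unboundedly many vertices even though all of its attachment-to-attachment paths are short (e.g.\ many internally disjoint $v_1$--$v_6$ paths of length $3$ in a $10$-cycle, tied together by one extra vertex, form one bridge with $2N+1$ internal vertices and no cycle longer than $10$), so ``place every vertex of $V(B)$ in the bags of its span'' already fails for one bridge. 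There are also smaller problems you wave past: a bridge whose shortest arc crosses the edge $v_cv_1$ occupies a non-contiguous set of positions in your linear order, so both the connectivity condition for its attachment vertices and the edge-coverage condition for its internal edges break. Repairing all this requires recursively decomposing each bridge and serialising the bridges that share a span while keeping only their attachments alive across the whole range --- a much more delicate argument than a per-bag count, and not obviously one that lands on the stated bound.

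The paper's proof avoids the longest-cycle decomposition entirely by bounding \emph{treedepth}. Take a depth-first spanning tree $T$ of $G$ rooted at $r$; by the DFS property every edge of $G$ joins an ancestor to a descendant, and any edge of span $s\ge 2$ closes a cycle of length $s+1$ with the tree path, so every edge has span at most $t-1$. By Menger's theorem there are two internally disjoint $vr$-paths, whose union is a cycle of length at most $t$, so some $vr$-path in $G$ has length at most $\lfloor t/2\rfloor$; following it edge by edge shows every vertex has height at most $\lfloor t/2\rfloor(t-1)$ in $T$. Hence the treedepth is at most $\lfloor t/2\rfloor(t-1)+1$ and the pathwidth at most $\lfloor t/2\rfloor(t-1)$. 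Note how this argument never has to count how many pieces of the graph sit ``over'' a given location, which is exactly where your approach gets stuck.
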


The 2-connectivity assumption is needed in Theorem~\ref{thm:2-conn} since complete binary trees have unbounded pathwidth. In particular, the complete binary tree of height $h$ has pathwidth $\ceil{\frac{h}{2}}$.

Birmele's theorem was extended by Birmele, Bondy and Reed~\cite{bondy}, who showed that graphs without $k$ disjoint cycles of length at least $t$ have treewidth $\mathcal{O}(tk^2)$. Under the additional assumption of $(k+1)$-connectivity, we prove that such graphs have bounded pathwidth. 

\begin{theorem}
\label{thm:disjointcycles} 
Every $(k+1)$-connected graph without $k$ disjoint cycles of length at least $t$ has pathwidth at most $\mathcal{O}( t^3+tk^2)$. 
\end{theorem}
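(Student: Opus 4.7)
The natural approach is induction on $k$. The base case $k=1$ is exactly Theorem~\ref{thm:2-conn}: a $2$-connected graph with no cycle of length at least $t$ has circumference at most $t-1$, so its pathwidth is at most $\floor{(t-1)/2}(t-2) = O(t^2)$, comfortably within the stated bound.

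For the inductive step, let $G$ be $(k+1)$-connected with no $k$ disjoint cycles of length at least $t$. If $G$ has no cycle of length at least $t$ at all, apply Theorem~\ref{thm:2-conn} directly. Otherwise fix such a cycle $C$, and set $G' := G - V(C)$. The crucial observation is that $G'$ contains no collection of $k-1$ disjoint cycles of length at least $t$: together with $C$, such a collection would violate the hypothesis on $G$. Unfortunately $G'$ is not necessarily $k$-connected, so we cannot feed it into the induction directly.

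To get around this, I would work bridge by bridge. For each bridge (i.e., $C$-component) $B$ of $C$ with attachment set $A_B \subseteq V(C)$, the $(k+1)$-connectivity of $G$ forces $|A_B| \geq k+1$ whenever $B$ is large enough to matter. I would then form an augmented graph $B^+$ by adding a carefully chosen gadget on $A_B$ -- small enough not to create any new cycle of length at least $t$, yet rich enough to make $B^+$ be $k$-connected. The inductive hypothesis, applied to $B^+$, then gives $\pw(B^+) = O(t^3 + t(k-1)^2)$.

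The principal obstacle will be assembling these local path decompositions into a global one for $G$. The cyclic order on $C$ is the key tool: a pair of bridges whose attachments interleave around $C$ with large gaps would produce extra disjoint long cycles, so -- modulo a small exceptional set -- the bridges should inherit an interval-like ordering along $C$. Concatenating their path decompositions in that order, while retaining $V(C)$ in every bag, costs at most $|V(C)|$ in width at each recursive level. Each of the $O(k)$ levels then contributes $O(t^2)$, and since $t^2 k \leq t^3 + tk^2$, we arrive at the required $\pw(G) = O(t^3 + tk^2)$. I expect the delicate point to be engineering the bridge augmentation so that $B^+$ is $k$-connected without creating long cycles, and verifying that the interval structure on bridges is robust enough to keep the bag overlap bounded.
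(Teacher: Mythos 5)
Your proposal has a genuine gap, and in fact two of its load-bearing steps fail. The most concrete problem is in the assembly: you propose to keep $V(C)$ in every bag and assert that this ``costs at most $|V(C)|$ in width at each recursive level,'' which you then book as $O(t^2)$ per level. But the hypothesis only bounds the \emph{number} of disjoint long cycles, not the circumference, so $|V(C)|$ is not bounded by any function of $t$ and $k$. For example, the wheel $W_n$ (an $n$-cycle plus a hub) is $3$-connected and contains no two disjoint cycles at all, yet contains a cycle on $n$ vertices; your scheme applied to it would produce bags of size $\Omega(n)$. Choosing $C$ to be a shortest cycle of length at least $t$ does not rescue this in general, and you give no argument that a suitably short long cycle exists. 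The second problem is the bridge augmentation: to make $B^+$ $k$-connected you must add connections between attachment vertices, but two attachments of $B$ may be joined inside $B$ by a path of length at least $t$, so any gadget linking them creates a long cycle in $B^+$ that does not correspond to a long cycle of $G$ disjoint from $C$; hence $B^+$ may well contain $k-1$ disjoint long cycles even though $G'$ does not, and the inductive hypothesis cannot be invoked. The ``interval-like ordering of bridges'' is likewise asserted rather than proved. So the induction on $k$, as set up, does not go through.

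For contrast, the paper avoids deleting a single long cycle altogether. It first applies the Erd\H{o}s--P\'osa theorem of Birmele, Bondy and Reed to obtain a hitting set $H$ with $|H|=O(tk^2)$ meeting every cycle of length at least $t$. Then $G-H$ has circumference less than $t$, so each block of $G-H$ has pathwidth $O(t^2)$ by Theorem~\ref{thm:2-conn}, and Lemma~\ref{lem:bctree} reduces everything to bounding the pathwidth of the block-cut-forest $T$ of $G-H$. The heart of the proof is showing that $\pw(T)$ is at most roughly $\log(k h)+t/2$: if it were larger, $T$ would contain a large complete binary tree minor (Lemma~\ref{lem:cbtminor}), and a counting/pigeonhole argument using the $(k+1)$-connectivity (every leaf block has $k$ neighbours in $H$) produces $k$ vertices of $H$ each attached to many leaves of $k$ disjoint subtrees, yielding $k$ disjoint long cycles via Lemma~\ref{lem:leafpath} --- a contradiction. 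Finally $H$ is added to every bag, which is affordable precisely because $|H|=O(tk^2)$, unlike $|V(C)|$. If you want to salvage your approach, you would need to replace $V(C)$ by something of bounded size that still certifies the drop in the cycle-packing number; that is essentially what the hitting set does.
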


We now show that the assumption of $(k+1)$-connectivity is needed in Theorem~\ref{thm:disjointcycles}. Suppose on the contrary that every $k$-connected graph without $k$ disjoint cycles of length at least $t$ has pathwidth at most $f(k,t)$ for some function $f$. Let $G$ be the graph obtained from the complete binary tree of height $h$ by adding $k-1$ dominant vertices. Observe that $G$ is $k$-connected. Since every cycle in $G$ uses at least one of the dominant vertices, $G$ contains no $k$ disjoint cycles. Thus $G$ has pathwidth at most $f(k,t)$ for all $t\geq 3$. On the other hand, the pathwidth of $G$ equals $\ceil{\frac{h}{2}}+k-1$. We obtain a contradiction by choosing $h>2\cdot f(k,t)$.

The proofs of Theorems~\ref{thm:2-conn} and \ref{thm:disjointcycles} are given in Sections~\ref{sec:2-conn} and \ref{proofdisjointcycles} respectively. We conclude in Section~\ref{minors} by re-interpreting these results in terms of excluded minors. In general, we observe that highly connected $H$-minor-free graphs have bounded pathwidth. Determining the minimum connectivity required for this behaviour to occur is an interesting line of future research.

\section{Definitions}

Let $G$ be an (undirected, simple, finite) graph. The \emph{circumference} of $G$ is the length of the longest cycle in $G$, or is 0 if $G$ is  acyclic. 
A \emph{tree decomposition} $(T,\{B_x \subseteq V(G): x\in V(T)\})$ of $G$ consists of a tree $T$ and a set $\{B_x\subseteq V(G): x \in V(T)\}$ of sets of vertices of $G$ indexed by the nodes of $T$, such that:
		\begin{itemize}
				\item for each vertex $v\in V(G)$, the set $\{x \in V(T):v \in B_x\}$ induces a non-empty (connected) subtree of $T$, and 
				\item for each edge $uv \in E(G)$, there is some $x \in V(T)$ such that $u,v \in B_x$.
				
		\end{itemize}
		 
We refer to the sets $B_x$ in the decomposition as \textit{bags}. The \textit{width} of a decomposition is the maximum size of a bag minus 1.  The \textit{treewidth} of a graph $G$, denoted by $\tw(G)$, is the minimum width over all tree decompositions of $G$.  A \textit{path decomposition} of $G$ is a tree decomposition whose underlying tree is a path. The \textit{pathwidth} of a graph $G$, denoted by $\pw(G)$, is the minimum width over all path decompositions of $G$. For simplicity, we describe a path decomposition by $(B_1,B_2,\dots,B_n)$, where $B_i$ is the bag associated with the $i$-th vertex in the path. In such a decomposition, for each vertex $v$ of $G$, let $L(v)$ be the bag $B_i$ containing $v$ with $i$ minimum. If $L(v)=L(w)=B_i$ for distinct $v,w \in V(G)$, then replace $B_i$ by the two bags $B_i \setminus \{v\}$ and $B_i$. Now $L(v)\neq L(w)$. Repeat this step until $L(v) \neq L(w)$ for all distinct $v,w \in V(G)$. Such a path decomposition is said to be \textit{normalised}. Hence, every graph has a normalised path decomposition with width $\pw(G)$.

A graph $H$ is a \textit{minor} of a graph $G$ if $H$ is isomorphic to a graph formed from a subgraph of $G$ by contracting  edges.  When $H$ is a minor of $G$, for each vertex $v \in V(H)$ there is a connected subgraph $C$ of $G$ that contracts to form $v$ in the minor.  We call $C$ the \textit{branch set} of $v$. 

In a rooted forest $F$, the \textit{height} of a vertex $v$ in $F$ is the distance between $v$ and the root of the component of $F$ that contains $v$. The \textit{height} of $F$ is the maximum height over all vertices of $F$. The \textit{closure} of  $F$, denoted $\text{clos}(F)$, is the graph with vertex set $V(F)$ and edge set $\{xy: x \text{ is an ancestor of } y, x \neq y\}$.  The \textit{treedepth} of a graph $G$, denoted $\td(G)$, is the minimum height plus 1 of a forest $F$ such that $G \subseteq \text{clos}(F)$. Treedepth is equivalent to several other notions including minimal elimination tree height and is closely related to a number of graph invariants including pathwidth and treewidth; see \cite{bodlaender2,nesetril}.

\section{Proof of Theorem~\ref{thm:2-conn}}
\label{sec:2-conn}

\begin{lemma} Every 2-connected graph $G$ with circumference $t$ has treedepth at most $\floor{\tfrac{t}{2}}(t-1)+1$.
\label{lem:dfstree}
\end{lemma}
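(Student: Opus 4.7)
The plan is to take a DFS tree $T$ of $G$ rooted at an arbitrary vertex. Since every non-tree edge in a DFS tree joins an ancestor to a descendant, $G \subseteq \text{clos}(T)$, so $\td(G) \leq \text{height}(T) + 1$; it therefore suffices to prove that the height of $T$ is at most $\floor{t/2}(t-1)$.

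Let $P = v_0, v_1, \dots, v_h$ be a longest root-to-leaf path in $T$. First I would extract the consequence of 2-connectivity: for each $i \in \{2, \dots, h\}$, the vertex $v_{i-1}$ is not a cut vertex of $G$, and by the standard DFS characterization of cut vertices this forces the subtree $T_{v_i}$ to contain a back edge to some $v_j$ with $j \leq i-2$. Define $f(i)$ to be the smallest such $j$; then $f$ is non-decreasing on $\{2,\dots,h\}$, $f(i) \leq i-2$, and the back edge witnessing $f(i)$, together with the tree path from $v_{f(i)}$ down through $v_i$ and into $T_{v_i}$, forms a cycle of length at most $t$, yielding $i - f(i) \leq t-1$.

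I would then iterate $f$ starting from $v_h$ to produce a strictly decreasing sequence $h = i_0 > i_1 > i_2 > \dots$ with $i_{m+1} = f(i_m)$. A single step gives $i_m - i_{m+1} \leq t-1$, but on its own this only yields $h \leq (\text{number of steps}) \cdot (t-1)$ without bounding the number of steps. The essential strengthening comes from combining pairs of back edges into a single cycle: if the chords $(f(i_1), i_1)$ and $(f(i_2), i_2)$ cross, meaning $i_1 < i_2$ and $f(i_2) \leq i_1$, then together with the two spanned path segments of $P$ they form a cycle of length $(i_2 - i_1) + (f(i_2) - f(i_1)) + 2$, which the circumference bound forces to be at most $t$. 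Any tree detours through non-path descendants of $T_{v_{i_1}}$ or $T_{v_{i_2}}$ only lengthen the cycle, so the constraint is strongest when the back edges originate at path vertices.

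The hard part will be the combinatorial accounting that extracts the factor $\floor{t/2}$ from the interaction of these single-chord and crossing-chord inequalities. Intuitively, two near-maximal chords (each of path-length close to $t-1$) are forced by the crossing-chord inequality to overlap substantially, which caps their combined coverage of $P$; iterating this pairing along the descending chain $i_0 > i_1 > i_2 > \dots$ should show that on average each step of the chain descends by at most $t-1$ while the chain itself has at most $\floor{t/2}$ steps. Carrying out this accounting carefully, with a parity case split on whether the terminal chord lands at $v_0$ or $v_1$ (which is where the $\floor{\cdot}$ rather than $\lceil\cdot\rceil$ should enter and produce the slight improvement over the Ne\v{s}et\v{r}il--Ossona de Mendez bound), should yield total descent at most $\floor{t/2}(t-1)$, proving $h \leq \floor{t/2}(t-1)$ and hence the desired treedepth bound.
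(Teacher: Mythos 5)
Your setup is fine as far as it goes: a DFS tree $T$ gives $G\subseteq\text{clos}(T)$, so it suffices to bound the height of $T$, and the single-chord inequality $i-f(i)\le t-1$ and the crossing-chord inequality are both correctly derived from the circumference bound. But the proof stops exactly where it would have to start. The entire quantitative content of the lemma --- why the descending chain $i_0>i_1>\dots$ obtained by iterating $f$ has total length at most $\floor{\tfrac{t}{2}}(t-1)$ --- is deferred to an accounting that you describe only as something that ``should'' work. And it is not clear that it can work in the form described: your two inequalities both give \emph{upper} bounds on how far each chord reaches ($i_m-i_{m+1}\le t-1$, and an overlap condition for crossing chords), but to cap the number of steps of the chain at roughly $\floor{\tfrac{t}{2}}$ you would need a \emph{lower} bound on how far each step descends, and nothing in the sketch supplies one. (Note also that consecutive chords in your chain need not cross in your sense: $i_{m+1}=f(i_m)$ only guarantees that the two chords touch at $v_{i_{m+1}}$, so the crossing-chord inequality does not automatically apply along the chain.) As written, the argument yields only height $\le(\text{number of steps})\cdot(t-1)$ with no control on the number of steps, which does not even recover the known $(t-2)^2$ bound.

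The paper avoids this accounting entirely with a different and much shorter idea. Fix the root $r$ and an arbitrary vertex $v$. By Menger's theorem there are two internally disjoint $vr$-paths in $G$; their union is a cycle of length at most $t$, so some $vr$-path $P$ in $G$ (not in $T$) has length at most $\floor{\tfrac{t}{2}}$. Since $T$ is a DFS tree, \emph{every} edge of $G$ joins an ancestor to a descendant, and its endpoints differ in height by at most $t-1$, since otherwise the tree path between them plus that edge would be a cycle longer than $t$. Walking along $P$ from $r$ to $v$, each edge changes the height by at most $t-1$, so the height of $v$ is at most $\floor{\tfrac{t}{2}}(t-1)$. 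The factor $\floor{\tfrac{t}{2}}$ thus comes from Menger applied to each vertex separately, not from chord bookkeeping along a single root-to-leaf path; if you want to salvage your approach you would need to actually produce the missing combinatorial argument, and I do not see how to do so along the lines you indicate.
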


\begin{proof}  
Let $T$ be a \emph{depth-first} spanning subtree $T$ of $G$ rooted at some vertex $r$. Thus $G\subseteq \text{clos}(F)$. Say an edge $vw$ of $T$ has \emph{span} $|i-j|$, where $v$ and $w$ are respectively at height $i$ and $j$ in $T$. For each edge $vw$ of span $s\geq 2$, the $vw$-path in $T$ plus $vw$ forms a cycle of length $s+1$. Thus $s\leq t-1$. Consider a vertex $v$ in $G$. By Menger's Theorem, there are two internally disjoint $vr$-paths in $G$. Their union is a cycle of length at most $t$. Thus there is a $vr$-path $P$ in $G$ of length at most $\floor{\tfrac{t}{2}}$. Since each edge in $P$ has span at most $t-1$, the height of $v$ is at most $\floor{\tfrac{t}{2}}(t-1)$. Hence the height of $T$ is at most $\floor{\tfrac{t}{2}}(t-1)$. The result follows. 
\end{proof}

Theorem~\ref{thm:2-conn} follows directly from Lemma~\ref{lem:dfstree} since $\pw(G) \leq \td(G)-1$ (see \cite{nesetril}).

\section{Proof of Theorem~\ref{thm:disjointcycles}}
 \label{proofdisjointcycles}

A \emph{block} in a graph $G$ is a maximal 2-connected subgraph of $G$, or the subgraph of $G$ induced by a bridge edge or an isolated vertex. It is well known that the blocks of $G$ form a proper partition of $E(G)$. The \emph{block-cut-forest} $T$ of a  graph $G$ is defined as follows: $V(T)$ is the set of  cut-vertices and blocks of $G$, where a cut-vertex $v$ is adjacent to a block $B$ whenever $v\in B$. It is well known that $T$ is a forest, and if $G$ is connected, then $T$ is a tree called the \emph{block-cut-tree}.

\begin{lemma}  
Let $T$ be the block-cut-forest of a graph $G$.  Assume that $\pw(T) \leq n$ and $\pw(B) \leq m$ for each block $B$ of $G$.  Then $\pw(G) \leq (m+3)(n+1)-3$.  
\label{lem:bctree}
\end{lemma}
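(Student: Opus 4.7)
The plan is to construct a path decomposition of $G$ directly from $\mathcal{P}_T$ and the block path decompositions. Take a normalised path decomposition $\mathcal{P}_T = (X_1,\dots,X_N)$ of $T$ of width $n$, and for each block $B$ of $G$, a normalised path decomposition $\mathcal{P}_B = (Y^B_1,\dots,Y^B_{k_B})$ of $B$ of width $m$. For each $i$, write $C_i = X_i \cap V(G)$ (the cut-vertices in $X_i$) and $\mathcal{B}_i = X_i \setminus V(G)$ (the blocks in $X_i$), so $|C_i| + |\mathcal{B}_i| \leq n+1$. For each block $B$, let $[a_B, b_B]$ be the interval of $\mathcal{P}_T$ where $B$ appears, and for each cut-vertex $c$ of $G$ let $[a_c, b_c]$ be its interval in $\mathcal{P}_T$.

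The first step is to expand each bag $X_i$ of $\mathcal{P}_T$ into a sequence of bags of $\mathcal{P}_G$. Each bag in this expansion has the form $C_i \cup \bigcup_{B \in \mathcal{B}_i} Y^B_{\phi_i(B)} \cup L_i$, where $\phi_i$ assigns a ``current'' bag of $\mathcal{P}_B$ to each $B \in \mathcal{B}_i$, and $L_i$ is a small ``linking'' set of cut-vertices of $G$ needed to maintain vertex-interval contiguity across block transitions. As the expansion of $X_i$ progresses, I update $\phi_i(B)$ one block at a time, cycling through the bags of each $\mathcal{P}_B$ over $B$'s lifetime in $\mathcal{P}_T$, so that every edge in each block is covered in some bag of $\mathcal{P}_G$.

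The second step is to verify the path-decomposition properties. Edge coverage is immediate since every edge lies in some block $B$ whose path decomposition is fully traversed. The delicate point is the vertex interval property for cut-vertices of $G$: such a vertex $c$ may appear in several blocks and in several of the $X_i$'s, potentially opening multiple windows of appearance in $\mathcal{P}_G$. Here the key observation is that the combined interval $[a_c, b_c] \cup \bigcup_{B \ni c}[a_B, b_B]$ is a single interval of $\mathcal{P}_T$: since $c$ is adjacent to each $B \ni c$ in $T$, each pair among these intervals overlaps. I then use $L_i$ to keep $c$ in every bag throughout this combined interval, securing contiguity.

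Finally I would bound the bag size. The main contribution $|C_i| + \sum_{B \in \mathcal{B}_i}|Y^B_{\phi_i(B)}|$ is at most $(n+1 - |\mathcal{B}_i|) + |\mathcal{B}_i|(m+1) \leq (m+1)(n+1)$. The main obstacle is bounding $|L_i|$: a priori, a single block may have many cut-vertex neighbours in $T$, so naively one might fear $|L_i|$ could be large. The key structural observation is that the intersection graph of $\mathcal{B}_i$ (two blocks adjacent iff they share a cut-vertex of $G$) is a forest, because $T$ itself is a forest; combined with a careful boundary analysis across transitions of $\mathcal{P}_T$, this yields $|L_i| \leq 2n$. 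Summing, each bag has size at most $(m+1)(n+1) + 2n = (m+3)(n+1) - 2$, and therefore $\pw(G) \leq (m+3)(n+1) - 3$.
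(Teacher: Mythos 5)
Your strategy---expand each bag $X_i$ of a path decomposition of $T$ into a run of bags built from the current positions $\phi_i(B)$ in the block decompositions, patched by a linking set $L_i$---breaks exactly at the step you flag as delicate, and the claim $|L_i|\leq 2n$ is false for the construction as you describe it. The difficulty is not interaction between distinct blocks in the same bag $X_i$ (which is what your forest observation addresses), but a \emph{single} block containing many cut-vertices of $G$. Let $G$ be a cycle $B=v_1v_2\dots v_N$ with a pendant triangle $B_j$ attached at each $v_j$. Then $T$ is a spider with centre $B$ and $N$ legs of length $2$, so $n=\pw(T)=2$ and $m=2$, and the lemma promises width at most $12$ independent of $N$. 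But for every $j$ the combined interval $[a_{v_j},b_{v_j}]\cup[a_B,b_B]\cup[a_{B_j},b_{B_j}]$ contains all of $[a_B,b_B]$, so your stated rule (``keep $c$ in every bag throughout this combined interval'') puts all $N$ cut-vertices into every bag of the expansion of $B$'s lifetime, giving $|L_i|\geq N-O(1)$. Nor can you simply weaken the rule to ``keep $c$ alive only between its actual occurrences'': take the width-$2$ decomposition of $T$ with bags $\{B,v_{\pi(1)},B_{\pi(1)}\},\dots,\{B,v_{\pi(N)},B_{\pi(N)}\}$ where $\pi$ is the reversal, while the width-$2$ decomposition of the cycle $B$ exhibits $v_j$ around position $j$; then $v_j$ occurs once near step $j$ (via $\phi(B)$) and once near step $N+1-j$ (via $C_i$ and $\mathcal{P}_{B_j}$), and restoring contiguity forces $\Omega(N)$ vertices to be simultaneously alive in the middle bags. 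The missing ingredient is an \emph{alignment} argument showing that optimal-width decompositions of $T$ and of each block can be chosen so that the order in which a block's cut-vertices appear in $\mathcal{P}_B$ is compatible with the order in which they appear in $\mathcal{P}_T$; nothing in the proposal supplies this, and it is the entire content of the lemma.

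The paper sidesteps the alignment problem by inducting on $\pw(T)$. It extracts an actual path $P=b_1v_1b_2\dots b_s$ in the tree $T$ that meets every bag of the given decomposition of $T$ (so $T-V(P)$ has pathwidth at most $n-1$), concatenates the decompositions of the blocks $B_1,\dots,B_s$ \emph{in the order given by $P$}, inserting the decompositions of the other blocks incident to $v_i$ between those of $B_i$ and $B_{i+1}$; each bag then needs only the at most two cut-vertices $v_{i-1},v_i$ added to it, giving bags of size $m+3$ with no alignment issue. The remaining blocks form a graph $G'$ whose block-cut-forest has pathwidth at most $n-1$, so induction gives a decomposition of each component of $G'$ with bags of size $(m+3)n-2$, which is spliced in at the first bag containing the relevant cut-vertex. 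Each of the $n+1$ levels thus contributes $m+3$ to the bag size, yielding $(m+3)(n+1)-2$ and width $(m+3)(n+1)-3$. If you want to keep a one-shot construction, you would in effect have to prove the alignment statement above, and the cleanest way to do that is precisely this peel-off-a-path induction.
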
  

\begin{proof} 
We proceed by induction on $\pw(T)$.  For the base case, say $\pw(T)=0$. Then $T$ has no edges, and each component of $G$ is 2-connected. Clearly, the pathwidth of $G$ equals the maximum pathwidth of the components of $G$. Thus $\pw(G)\leq m=(m+3)(n+1)-3$.

Now assume that $\pw(T)\geq1$. Since the pathwidth of $G$ equals the maximum pathwidth of the components of $G$, we may assume that $G$ is connected. Thus $T$ is connected. Let $(X_1,X_2,\dots,X_s)$ be a path decomposition of $T$ with width at most $n$. Choose vertices $x \in X_1$ and $y \in X_s$. Let $P$ be a maximal path in $T$ that contains an $xy$-path. Then $V(P) \cap X_i \neq \emptyset$ for all $i$. Let $X_i'=X_i - V(P)$; then $|X_i'| \leq |X_i|-1$. Now $(X_1',X_2',\dots,X_s')$ is a path decomposition of $T-V(P)$ with width at most $n-1$. By the maximality of $P$, the endpoints of $P$ are leaf vertices of $T$. No cut-vertex of $G$ is a leaf of $T$. Thus the endpoints of $P$ correspond to blocks. Say $P=b_1v_1b_2v_2\dots b_{s-1}v_{s-1}b_s$, where $b_i$ represents the block $B_i$ in $G$, and $v_i$ is a cut-vertex in $G$. For each $v_i$, let $C_{i,1}, C_{i,2},\dots,C_{i,t_i}$ be the blocks in $G$ corresponding to neighbors of $v_i$ in $T-V(P)$. Let $G_0:=\bigcup\{B_i:1\leq i\leq s\} \bigcup\{C_{i,j}:1 \leq i \leq s-1, 1 \leq j \leq t_i\}$. Let $G'$ be the union of the blocks not in $G_0$. Then $G=G_0 \bigcup G'$, as illustrated in Figure~\ref{fig:G_0andG}. 

\begin{figure}[htb]
	\centering \scalebox{.8}{\includegraphics{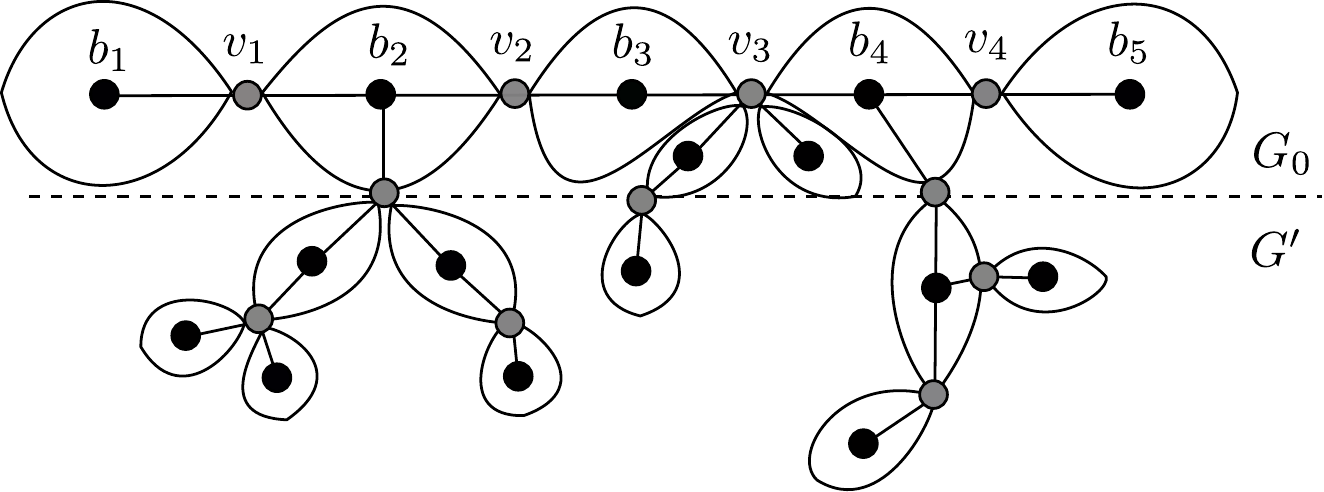}}
	\caption{\label{fig:G_0andG}Example of a block-cut tree: $P$ is the path $(b_1,v_1,b_2,v_2,b_3,v_3,b_4,v_4,b_5)$. The subgraphs $G_0$ and $G'$ respectively consist of the blocks above and below the dotted line. }
\end{figure} 

Let $T'$ be the forest obtained from $T-V(P)$ by removing the leaf vertices that correspond to cut-vertices in $G$.  This step removes all cut-vertices in $G$ that are not cut-vertices in $G'$, and the blocks that remain are blocks in $G'$.  Thus $T'$ is the block-cut-forest of $G'$.  Since $T'$ is a subgraph of $T-V(P)$, we have $\pw(T')\leq \pw(T-V(P)) \leq n-1$.  Furthermore, since each block $B$ of $G'$ is also a block of $G$, $\pw(B) \leq m$.  Let $G_1', G_2',\dots,G_r'$ be the components of $G'$.  By induction, $\pw(G_j') \leq (m+3)n-3$ for $1 \leq j \leq r$.  Let $(H_{j,1}, H_{j,2},\dots,H_{j,k_j})$ be a path decomposition of $G_j'$ with $|H_{j,\ell}| \leq (m+3)n-2$.  

We now construct a path decomposition of $G_0$.  For $1\leq i\leq s$, let $(X_{i,1},X_{i,2},\dots,X_{i,k_i})$ be a path decomposition of $B_i$ with $|X_{i,j}| \leq m+1$ for $1 \leq j \leq k_i$.  Define $X^+_{1,j}:=X_{1,j}\cup\{v_1\}$ and $X^+_{s,j}:=X_{s,j}\cup\{v_{s-1}\}$ and $X^+_{i,j}:=X_{i,j}\cup\{v_{i-1},v_i\}$ for $1< i<s$. For each $C_{i,j}$, let $(S_{i,j,1},S_{i,j,2},\dots,S_{i,j,k_{i,j}})$ be a path decomposition with $|S_{i,j,\ell}| \leq m+1$.  Define $S_{i,j,\ell}^+:=S_{i,j,\ell}\cup\{v_i\}$.  Denote by $T_{i,j}$ the sequence of bags $(S_{i,j,1}^+,S_{i,j,2}^+,\dots,S_{i,j,k_{i,j}}^+)$.  It is easily proved that 
$$(X^+_{1,1},\dots,X^+_{1,k_1},
T_{1,1},\dots,T_{1,t_1},
X^+_{2,1},\dots,X^+_{2,k_2},
T_{2,1},\dots,T_{2,t_2},
\dots,
T_{s-1,1},\dots,T_{s-1,t_{s-1}},
X^+_{s,1},\dots,X^+_{s,k_s})$$
is a path decomposition of $G_0$. The maximum bag size is at most $m+3$. 
Let $(Y_1,Y_2,\dots,Y_p)$ be a normalised path decomposition of $G_0$ with $|Y_i| \leq m+3$ for $1 \leq i \leq p$.  Then  
$L(v)\neq L(w)$ for $v \neq w$.

We now construct a path decomposition of $G$.  For each component $G_j'$ of $G'$, let $w_j$ be the cut-vertex in $G_0 \cap G_j'$.  Note that $w_j$ is distinct for each $G_j'$.  Replace the bag $L(w_j)$ with the bags
 
$$(L(w_j)\cup H_{j,1}, L(w_j) \cup H_{j,2}, \dots, L(w_j) \cup H_{j, k_j}).$$
  
The bag size is at most $m+3+(m+3)n-2=(m+3)(n+1)-2$. For simplicity, rename the decomposition $(Z_1,\dots,Z_q)$.  It remains to show that $(Z_1,\dots,Z_q)$ is a path decomposition of $G$.  For each edge $xy$ in $G$, we have $x,y \in Z_i$ for some $i$.  Suppose $v \in Z_i \cap Z_j$ for $j > i+1$.  Furthermore, assume $v \in V(G'-G_0)$ and without loss of generality, $v \in V(G_1')$.  Then by construction, $H_{1,r} \subset Z_i, H_{1,r+1} \subset Z_{i+1}, \dots , H_{1,r+j-i} \subset Z_j$ for some $r$.  $v \in H_{1,t}$ for $r \leq t \leq r+j-i$ so $v \in Z_s$ for $i \leq s \leq j$.  Now instead assume $v \in V(G_0)$.  Then by construction, $Y_r \subset Z_i$ and $Y_s \subset Z_j$ for some $r,s$ with $s \geq r$.  $v \in Y_t$ for $r \leq t \leq s$ so $v \in Z_{\ell}$ for $i \leq \ell \leq j$.      

We conclude that $(Z_1,\dots,Z_q)$ is a valid path decomposition. Since $|Z_i| \leq (m+3)(n+1)-2$, we have $\pw(G) \leq (m+3)(n+1)-3$.
\end{proof}

Let $T$ be a complete binary tree embedded in the plane as illustrated in Figure~\ref{fig:cbtree}.  Vertices at the same distance from the root are at the same level.  Number the leaf vertices from left to right; let $v_i$ be the leaf  labeled $i$ as shown.      

		\begin{figure}[htb]
		\centering \scalebox{.7}{\includegraphics{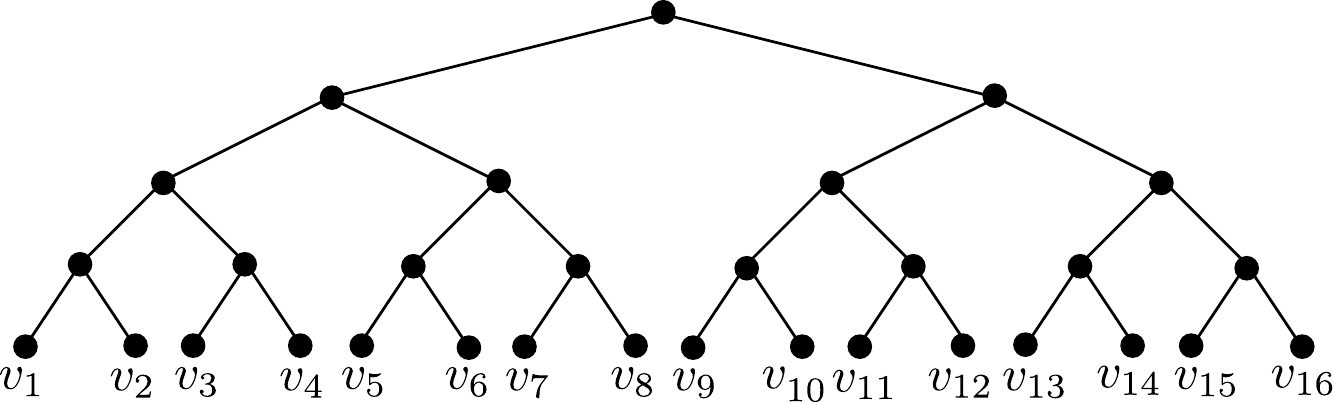}}
		\caption{\label{fig:cbtree}Left-to-right labeling of the leaves of the complete binary tree.}
		\end{figure}

		\begin{lemma}  Let $T$ be a complete binary tree with leaf vertices numbered as in Figure~\ref{fig:cbtree}.  Then the path in $T$ between $v_a$ and $v_b$ has length at least $2\log_2(b-a+1)$ where $a \leq b$.  
		\label{lem:leafpath}
		\end{lemma}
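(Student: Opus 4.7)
The plan is to analyze the lowest common ancestor (LCA) of $v_a$ and $v_b$ in $T$ and relate its depth to the number of consecutive leaves $v_a, v_{a+1}, \ldots, v_b$ that lie below it. Let $h$ denote the height of $T$, so every leaf is at depth $h$ from the root.

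First, I would let $u$ be the LCA of $v_a$ and $v_b$, and let $d$ be the depth of $u$ (distance from the root). Since $T$ is a complete binary tree, the subtree $T_u$ rooted at $u$ is itself a complete binary tree of height $h-d$, and therefore contains exactly $2^{h-d}$ leaves. The key observation, which follows from the left-to-right labeling shown in Figure~\ref{fig:cbtree}, is that the leaves contained in any subtree of $T$ form a block of consecutive indices. In particular, since $v_a$ and $v_b$ both lie in $T_u$, so do all of $v_a, v_{a+1}, \ldots, v_b$, giving
\[
b - a + 1 \;\leq\; 2^{h-d},
\]
and hence $h - d \geq \log_2(b-a+1)$.

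Second, since the path in $T$ between $v_a$ and $v_b$ passes through $u$, and each of $v_a,v_b$ lies at depth $h$ while $u$ lies at depth $d$, the length of this path equals $(h-d)+(h-d) = 2(h-d)$. Combining with the previous inequality gives length at least $2\log_2(b-a+1)$, as required.

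There is no real obstacle here; the only point requiring a line of justification is the consecutivity of leaves within the subtree $T_u$, which one can verify by induction on $h-d$: the left child of $u$ roots a subtree whose leaves are precisely the first $2^{h-d-1}$ leaves of $T_u$ (in the global labeling, a consecutive block), and similarly for the right child, so together the leaves of $T_u$ form a consecutive block.
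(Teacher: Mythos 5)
Your proof is correct and takes essentially the same approach as the paper: both arguments rest on the facts that the leaves below any vertex form a consecutive block of size $2^{(\text{height of that vertex above the leaves})}$ and that the $v_a$--$v_b$ path has length twice the height of their lowest common ancestor above the leaf level. Your version, measuring depth from the root and deriving $b-a+1\leq 2^{h-d}$ directly, is if anything a slightly cleaner write-up of the same idea.
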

		
\begin{proof} Let $V_0$ be the set of all leaf vertices of $T$.  Let $V_i$ be the set of all vertices $u$ of $T$ such that the shortest path from $u$ to a vertex in $V_0$ has length $i$.  Since $T$ is a complete binary tree, each $u \in V_i$ has exactly $2^i$ descendants in $V_0$; furthermore, the descendants are $v_j, v_{j+1},\dots,v_{j+2^i-1}$ for some number $j$.  Consider the vertex $v_a$ and suppose $u \in V_i$ is an ancestor of $v_a$.  Then if $v_j \in V_0$ also has $u$ as an ancestor, then $j \in [a-(2^i-1),a+(2^i-1)]$.  

For all $b \geq a$, there exists $k$ such that $2^k \leq (b-a+1) < 2^{k+1}$.  Then, for $i <k$, $b \notin [a-(2^i-1),a+(2^i-1)]$, so $v_a$ and $v_b$ do not have a common ancestor in $V_i$.  However, $b \in [a-(2^j-1),a+(2^j-1)]$ for all $j \geq k$ and there exists some $j \geq k$ such that $v_a$ and $v_b$ have a common ancestor $u$ in $V_j$.  Then by the definition of $V_j$, the path $P_1$ from $v_a$ to $u$ has length $j$ and the path $P_2$ from $u$ to $v_b$ has length $j$.  Thus $P_1P_2$ is a path of length $2j$ from $v_a$ to $v_b$.  Since $2^k \leq b-a+1<2^{k+1}$ and $j \geq k$, we have $2j \geq 2\log_2(b-a+1)$.    
\end{proof}  

\begin{lemma}
Let $T$ be a forest with $\pw(T) \geq t\geq 1$.  Then $T$ contains a complete binary tree of height $t-1$ as a minor.  Moreover, for any vertex $v \in V(T)$, there is such a minor in $T$ with the property that $v$ is in the branch set of the root of the binary tree.    
\label{lem:cbtminor}
\end{lemma}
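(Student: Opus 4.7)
The plan is to proceed by strong induction on $t$. The base case $t=1$ is immediate: the complete binary tree of height $0$ is a single vertex, and the branch set $\{v\}$ realises it as a minor. We may restrict to the component of $T$ containing $v$, so $T$ is henceforth a tree with $\pw(T)\geq t$.

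The inductive step rests on the following structural sub-lemma: if $T$ is a tree with $\pw(T) \geq t \geq 2$, then some vertex $u \in V(T)$ has at least three components of $T - u$ of pathwidth $\geq t - 1$. Granting this, let $u$ be such a vertex and $C_1, C_2, C_3$ three such heavy components; at most one of them contains $v$, so we may assume $v \notin V(C_1) \cup V(C_2)$. Let $u_i$ denote the neighbor of $u$ lying in $C_i$ for $i\in\{1,2\}$. By the inductive hypothesis applied to $(C_i, u_i)$, there is a complete binary tree of height $t - 2$ as a minor of $C_i$ with $u_i$ in its root branch set. Let $P$ be the unique $vu$-path in $T$; because $u$ separates $C_1, C_2$ from $v$, this path is disjoint from $V(C_1) \cup V(C_2)$. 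Taking $V(P)$ as the root branch set and the two inductive minors as its subtrees (adjacent to the root via the edges $uu_1,uu_2\in E(T)$) yields a complete binary tree of height $t - 1$ as a minor of $T$ with $v$ in the root branch set, as required.

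The main obstacle is the sub-lemma itself, which is essentially the fact that the complete ternary tree of depth $t$ is the minimal tree obstruction to pathwidth at most $t-1$. I would prove it by contrapositive: root $T$ at an arbitrary vertex and let $u$ be a \emph{deepest} vertex with $\pw(T_u) \geq t$, where $T_u$ denotes the subtree rooted at $u$. Every child-subtree of $u$ then has pathwidth at most $t - 1$. If at most two of them attain pathwidth $t - 1$, their path decompositions can be concatenated---with $u$ threaded through the middle bags, each lower-pathwidth child-subtree augmented by $u$ in every bag, and short transition bags $\{u,w_i\}$ used at the two ends---to produce a path decomposition of $T_u$ of width at most $t - 1$, contradicting the choice of $u$. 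The only technical nuance is arranging for each child $w_i$ of $u$ to sit in an end-bag of its subtree's decomposition, which is easily achieved by appending or prepending a singleton bag $\{w_i\}$.
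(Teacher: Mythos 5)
Your top-level induction is a genuinely different route from the paper's: you reduce the lemma to the Ellis--Sudborough--Turner-style characterisation that $\pw(T)\geq t$ forces a vertex $u$ with three components of $T-u$ of pathwidth at least $t-1$, and the way you then assemble the two inductive minors with the $vu$-path as the root branch set is correct. The paper avoids that characterisation entirely: it inducts on $|V(T)|$ with a strengthened invariant, the \emph{rooted pathwidth} $\rpw(T,v)$ (minimum width over path decompositions with $v$ in the last bag), and drives the case analysis with the recursion $\rpw(T,v)\leq\max\{r_1,r_2+1\}$ over the children's rooted pathwidths.

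The problem is your proof of the sub-lemma, which has a genuine gap at precisely the point the paper's rooted-pathwidth device is designed to handle. In your contrapositive argument the two heavy child-subtrees $T_{w_1},T_{w_2}$ have pathwidth exactly $t-1$, and your concatenation needs a width-$(t-1)$ path decomposition of $T_{w_i}$ with $w_i$ in an end bag --- that is, $\rpw(T_{w_i},w_i)\leq t-1$. This is false in general. For example, join a vertex $w$ to the centres of two disjoint spiders, each having three legs of length $2$: the resulting tree has pathwidth $2$, but in any width-$2$ path decomposition each spider contributes a bag consisting of three of its own vertices, and a connectivity argument shows $w$'s interval cannot lie to the right of both such bags; hence no width-$2$ decomposition has $w$ in an end bag. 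Your proposed repair --- appending a singleton bag $\{w_i\}$ --- does not work either: if $w_i$ already occurs only in interior bags, the appended occurrence violates the condition that the bags containing $w_i$ be contiguous, and fixing that by adding $w_i$ to all intervening bags can push the width up to $t$, so no contradiction with $\pw(T_u)\geq t$ is obtained. The sub-lemma itself is true, but its standard proof needs a different argument (exhibit a path $P$ such that every component of $T-V(P)$ has pathwidth at most $t-2$ and thread the decompositions along $P$) --- or, circularly, the rooted-pathwidth machinery the paper uses. As written, the proposal is incomplete.
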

		
\begin{proof} 
Since the pathwidth of a graph equals the maximum pathwidth of its components, we may assume that $T$ is a tree. 
For a vertex $v$ of $T$, define the \textit{rooted pathwidth} of $T$ at $v$, denoted $\rpw(T,v)$, as the minimum width of a path decomposition of $T$ such that $v$ is in the last bag of the decomposition.  We say such a decomposition is \emph{rooted} at $v$.  

We prove, by induction on $|V(T)|$, that if $\rpw(T,v) \geq t$ for some vertex $v$ of a tree $T$, then $T$ contains   a complete binary tree of height $t-1$ as a minor with $v$ in the branch set of the root. Since $\rpw(T,v) \geq \pw(T)$, the result follows when $\pw(T) \geq t$.  

In the base case with $|V(T)|=2$, the rooted pathwidth at a given vertex is 1 and the tree trivially contains a complete binary tree of height 0 rooted at the given vertex.  

Now suppose $|V(T)| \geq 3$ and let $v$ be such that $\rpw(T,v) \geq t$. 
Let $w_1,w_2,\dots,w_d$ be the neighbors of $v$ and let $T_i$ be the component of $T-v$ rooted at $w_i$ for $1 \leq i \leq d$.  Let $r_i=\rpw(T_i,w_i)$. Without loss of generality,  $r_1 \geq r_2 \geq \dots \geq r_d$.  

Let $(X_{i,1},X_{i,2},\dots,X_{i,k_i})$ be a path decomposition of $T_i$ rooted at $w_i$ with width $r_i$.  For $2 \leq i \leq d$, let $X_{i,j}^+=X_{i,j} \cup \{v\}$.  Then 
$$(X_{1,1}, X_{1,2},\dots,X_{1,k_1},\{w_1,v\},X_{2,1}^+, X_{2,2}^+,\dots,X_{2,k_2}^+,X_{3,1}^+,X_{3,2}^+,\dots,X_{3,k_3}^+,\dots,X_{d,1}^+,X_{d,2}^+,\dots,X_{d,k_d}^+)$$
is a path decomposition of $T$ rooted at $v$ with  width $\text{max}\{r_1,r_2+1\}$.  Here we use the fact that $w_1\in X_{1,k_1}$. Thus $\rpw(T,v) \leq \text{max}\{r_1,r_2+1\}$. 

First suppose that $r_1 \geq r_2+1$.  Then $\rpw(T_1,w_1)=r_1\geq \rpw(T,v) \geq t$. By induction, $T_1$ contains a complete binary tree of height $t-1$ rooted at $w_1$ as a minor. Extend the branch set containing $w_1$ to include $v$. We  obtain a complete binary tree of height $t-1$ rooted at $v$ as a minor in $T$.  

Now suppose that $r_2+1 > r_1$. Then $r_1= r_2 \geq \rpw(T,v)-1 \geq t-1$.  By induction, $T_1$ and $T_2$ each contain a complete binary tree of height $t-2$ as a minor rooted at $w_1$ and $w_2$ respectively. Thus $T$ contains a complete binary tree  of height $t-1$ rooted at $v$ as a minor.
\end{proof}

To prove Theorem~\ref{thm:disjointcycles}, we need the following. Let $\mathcal{F}$ be a family of graphs.  For a graph $G$, a \textit{hitting set} $H$ of $\mathcal{F}$ is a set of vertices of $G$ such that $G-H$ contains no member of $\mathcal{F}$.  The family $\mathcal{F}$ is said to satisfy the \textit{Erd\H{o}s-P\'{o}sa property} if there is a function $f:\mathbb{N} \rightarrow \mathbb{N}$ such that for all graphs $G$, either $G$ contains $k$ vertex-disjoint members of $\mathcal{F}$ or  $G$ contains a hitting set $H$ of size at most $f(k)$.  Birmele, Bondy and Reed~\cite{bondy} proved that if $\mathcal{F}_t$ is the family of cycles of length at least $t$, then $\mathcal{F}_t$ satisfies the Erd\H{o}s-P\'{o}sa property with $f(k)=13t(k-1)(k-2)+(2t+3)(k-1)$. 
    
\begin{proof}[Proof of Theorem~\ref{thm:disjointcycles}] 
Since $G$ contains no $k$ vertex-disjoint cycles of length at least $t$, by the above-mentioned result of Birmele, Bondy and Reed~\cite{bondy}, there is  a hitting set $H \subseteq V(G)$ such that $|H| \leq h := 13t(k-1)(k-2)+(2t+3)(k-1)$.  Let $T$ be the block-cut-forest of $G-H$. Define 
\begin{align*}
i:=\lfloor\log_2(k-1)(2h-2k+1)\rfloor+1\quad\text{and}\quad
j:=\lceil \tfrac{t}{2}+\log_2(h-k+1)\rceil.
\end{align*}
Since $G$ is $(k+1)$-connected, $|H| \geq k$.  Hence $h \geq |H| \geq k$, and $i$ and $j$ are well-defined.  

First suppose that $\pw(T) \leq i+j$. Since $H$ is a hitting set, $G-H$ has circumference at most $t-1$. Thus the 2-connected blocks of $G-H$ have pathwidth at most $\floor{\tfrac{t-1}{2}}(t-2)$ by Theorem~\ref{thm:2-conn}. The blocks that are not 2-connected consist of bridges or isolated vertices, which have pathwidth at most 1. By Lemma~\ref{lem:bctree} with $m=\floor{\tfrac{t-1}{2}}(t-2)$ and $n=i+j$, we have $\pw(G-H) \leq (\floor{\tfrac{t-1}{2}}(t-2)+3)(i+j+1)-3$.  Add  $H$ to each bag of an optimal path decomposition of $G-H$ to obtain a path decomposition of $G$ with width at most $(\floor{\tfrac{t-1}{2}}(t-2)+3)(i+j+1)-3 + h\in \mathcal{O}(t^3+tk^2)$.

It remains to handle the case when $\pw(T)>i+j$. We claim, however, that this case does not occur. Suppose it does and assume $\pw(T)>i+j$. By Lemma~\ref{lem:cbtminor}, $T$ contains a complete binary tree $T'$ of height $i+j$ as a minor. It is well known and easily proved that if a graph $A$ contains a graph $B$ with maximum degree 3 as a minor, then $A$ contains a subdivision of $B$ as a subgraph. Thus, $T$ contains a subdivision $S$ of $T'$ as a subgraph. By taking $S$ maximal, each leaf of $S$ is a leaf  of $T$. 
 
For each $v \in H$, let $d(v)$ be the number of leaves $u$ of $S$ such that $v$ is adjacent in $G$ to some vertex in the block corresponding to $u$ (in which case we say that $v$ is \emph{adjacent} to $u$). Since $G$ is $(k+1)$-connected, each leaf  of $S$ has at least $k$ neighbors in $H$. Since $S$ contains $2^{i+j}$ leaves, $\sum_{v \in H} d(v) \geq k\,2^{i+j}$. Let $H=\{v_1,v_2,\dots,v_h\}$ and $d_m:=d(v_m)$. Without loss of generality,  $d_1 \geq d_2 \geq \dots \geq d_h$. Since $d_m \leq 2^{i+j}$ for $1 \leq m \leq h$,  $$d_k+d_{k+1}+\dots+d_h \geq k\,2^{i+j}-(d_1+d_2+\dots+d_{k-1}) \geq k\,2^{i+j}-(k-1)2^{i+j}=2^{i+j}.$$ Hence $d_1 \geq d_2 \geq \dots \geq d_k \geq 2^{i+j}/(h-k+1)$. Let $X:=\{v_1,v_2,\dots,v_k\}$.

Since $T'$ has height $i+j$, there are $2^i$ pairwise disjoint subtrees $T_1,T_2,\dots,T_{2^i}$ in $S$, each a subdivision of a complete binary tree of height $j$, such that for $1\leq m\leq 2^i$, the leaves of $T_m$ are leaves of $S$ and the root of  $T_m$ is at height $i$ in $T'$, as illustrated in Figure~\ref{fig:CBT}. For each $v \in X$, we say the pair $(v,T_m)$ is \textit{good} if $v$ is adjacent to at least $2^{j-1}/(h-k+1)$ leaves of $T_m$. We claim that each $v \in X$ is in at least $k$ good pairs. Suppose for the sake of contradiction that some $v \in X$ is in at most $k-1$ good pairs. Then 
$$\frac{2^{i+j}}{h-k+1} \leq d(v) \leq (k-1)2^j+(2^i-k+1)\frac{2^j}{2(h-k+1)}. $$ Thus $2^i \leq (k-1)(2h-2k+1)$, which contradicts the definition of $i$. Thus each $v \in X$ is in at least $k$ good pairs.  Since $|X|=k$, there is a distinct $T_m$ for each $v \in X$ such that $(v,T_m)$ is a good pair. 

\begin{figure}[htb]
\centering \scalebox{.7}{\includegraphics{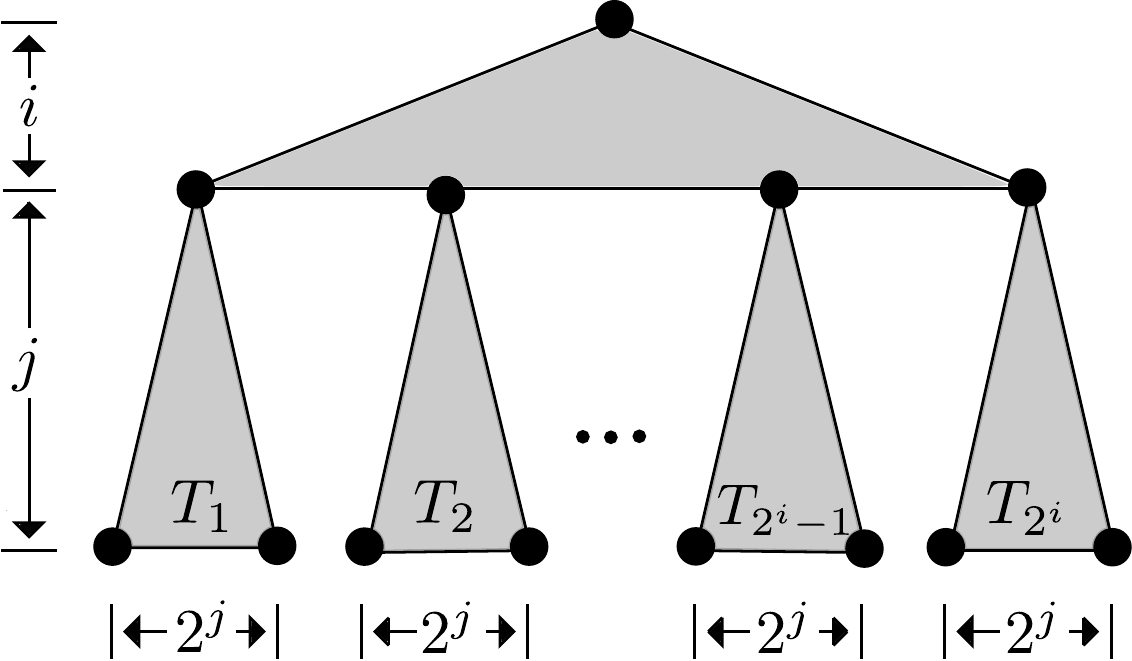}}
\caption{\label{fig:CBT}Complete binary tree of height $i+j$ with $2^i$ disjoint subtrees of height $j$.}
\end{figure}

For each such pair $(v,T_m)$, label the leaf vertices of $T_m$ as in Figure~\ref{fig:cbtree}. Since $v$ is adjacent to at least $2^{j-1}/(h-k+1)$ leaf vertices, there are two leaves $x$ and $y$ labeled $a$ and $b$ respectively such that $b-a+1 \geq \frac{2^{j-1}}{h-k+1}$. Then by Lemma~\ref{lem:leafpath}, there is a path $P$ of length at least $2\log_2\left(\frac{2^{j-1}}{h-k+1}\right) = 2j-2-2\log_2(h-k+1)$ in $T_m$ between $x$ and $y$. Thus $vPv$ is a cycle of length  $2j-2\log_2(h-k+1)\geq t$ in $T_m\cup\{v\}$. Since the $T_m$ are pairwise disjoint, we have $k$ pairwise disjoint cycles $C_1,C_2,\dots,C_k$ of length at least $t$ in $T \cup H$. 

We now construct pairwise disjoint cycles  $C_1',C_2',\dots,C_k'$ in $G$. Say $C_1=v_1B_1v_2B_2 \dots B_rv_1$, where $v_1 \in H$, $v_i$ is a cut-vertex in $G-H$ for $2 \leq i \leq r-1$, and $B_i$ is a block in $G-H$. The vertex $v_1$ is adjacent to a vertex $x$ in $B_1$. Let $P_1$ be a path from $x$ to $v_2$ in $B_1$. Next, for $2 \leq i \leq r-1$, let $P_i$ be a path from $v_i$ to $v_{i+1}$ in $B_i$, such that if there is a vertex $v$ in $B_i\cap V(C_j)$ for some $j \neq 1$, then choose $P_i$ such that $v \notin V(P_i)$, as illustrated in Figure~\ref{fig:newcycles}. Since each vertex in $S$ has degree at most $3$, there is at most one such vertex $v$ to be avoided. Therefore, since $B_i$ is $2$-connected, such a $P_i$ exists. For $B_r$, let $P_r$ be a path from $v_r$ to $y$ in $B_r$, where $y$ is a neighbor of $v_1$. Let $C_1'=v_1xP_1v_2P_2v_3\dots P_ryv_1$.  From each $C_i$, construct $C_i'$ in $G$ in this same manner. The cycles $C_1',C_2',\dots,C_k'$ by construction are pairwise disjoint with length at least $t$ in $G$, which is a contradiction.     
\end{proof}  

\begin{figure}[htb]
\centering \scalebox{.7}{\includegraphics{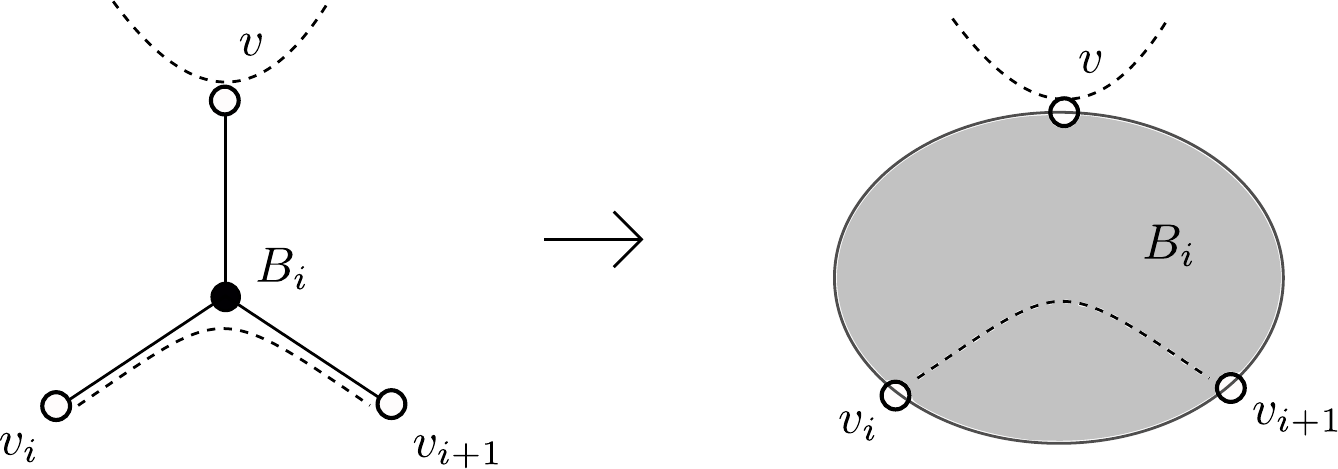}}
\caption{\label{fig:newcycles}Routing the cycles in $G$.}
\end{figure}

\section{Relationship to Forbidden Minors}
\label{minors}

Another way to describe a graph $G$ with circumference $t-1$ is to say $G$ is $C_t$-minor-free where $C_t$ is a cycle on $t$ vertices.  Our two main theorems can thus be restated in terms of minors:

		\begin{theorem}  Let $G$ be a 2-connected $C_t$-minor-free graph.  Then $\pw(G) \leq \floor{\frac{t-1}{2}}(t-2)$.
		\label{thm:new2-conn}
		\end{theorem}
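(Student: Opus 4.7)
The proof should be essentially immediate from Theorem~\ref{thm:2-conn}, so the whole task is to translate the $C_t$-minor-free hypothesis into a circumference bound and then plug in.

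My plan is first to observe that a 2-connected $C_t$-minor-free graph $G$ has circumference at most $t-1$. Indeed, suppose for contradiction that $G$ contains a cycle $C$ of length $\ell \geq t$. Then $C$ is a subgraph of $G$, and contracting any $\ell - t$ edges of $C$ yields a cycle on exactly $t$ vertices, so $G$ has $C_t$ as a minor, contradicting the hypothesis. Hence $G$ has circumference at most $t-1$.

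Next I would simply apply Theorem~\ref{thm:2-conn} to $G$ with circumference parameter $t-1$ in place of $t$. This gives
\[
\pw(G) \le \floor{\tfrac{t-1}{2}}((t-1)-1) = \floor{\tfrac{t-1}{2}}(t-2),
\]
which is exactly the bound claimed.

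There is no real obstacle here: the only content is the one-line observation that contracting edges of a long cycle produces $C_t$, after which Theorem~\ref{thm:2-conn} does all the work. The small sanity check worth flagging is that the $\floor{\cdot}$ arithmetic lines up correctly when $t$ is replaced by $t-1$ in the formula of Theorem~\ref{thm:2-conn}, which it does.
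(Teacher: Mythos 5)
Your proposal is correct and matches the paper's own treatment: the paper obtains Theorem~\ref{thm:new2-conn} by exactly this observation that $C_t$-minor-freeness forces circumference at most $t-1$ (a long cycle contracts to $C_t$), followed by substituting $t-1$ into Theorem~\ref{thm:2-conn}. The arithmetic check is also right.
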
 
		 
Let $C_{t,k}$ be the graph consisting of $k$ disjoint cycles of length $t$.

		\begin{theorem}  Let $G$ be a $(k+1)$-connected $C_{t,k}$-minor-free graph.  Then $\pw(G) \leq \mathcal{O}(t^3+tk^2)$. 
		\label{thm:newdisjointcycles}
		\end{theorem}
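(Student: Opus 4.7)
The plan is to observe that this theorem is essentially a direct corollary of Theorem~\ref{thm:disjointcycles}, obtained by translating the ``no $k$ disjoint long cycles'' hypothesis into the language of excluded minors. The key observation is that $C_{t,k}$-minor-freeness is implied by (in fact, equivalent to) the absence of $k$ vertex-disjoint cycles of length at least $t$.

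First I would verify the minor-containment direction: suppose $G$ contains $k$ pairwise vertex-disjoint cycles $D_1,\dots,D_k$, each of length at least $t$. For each $D_i$, contract edges along the cycle until exactly $t$ vertices remain; the result is a cycle $C_t$. Since the $D_i$ are vertex-disjoint, these contractions can be performed independently, and the resulting minor is a disjoint union of $k$ copies of $C_t$, i.e.\ $C_{t,k}$. Therefore, if $G$ is $C_{t,k}$-minor-free, then $G$ has no $k$ pairwise disjoint cycles of length at least $t$.

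Now I would simply invoke Theorem~\ref{thm:disjointcycles}: since $G$ is $(k+1)$-connected and contains no $k$ pairwise disjoint cycles of length at least $t$, we conclude that $\pw(G) \in \mathcal{O}(t^3 + tk^2)$, as desired.

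There is essentially no obstacle here: the main content has already been proved in Theorem~\ref{thm:disjointcycles}, and the only step is the trivial contraction argument showing that long disjoint cycles witness a $C_{t,k}$ minor. The proof is therefore a one-line reduction to the previous theorem.
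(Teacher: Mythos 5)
Your proposal is correct and matches the paper's intent exactly: the paper presents this theorem as a direct restatement of Theorem~\ref{thm:disjointcycles}, relying on precisely the observation you verify, namely that $k$ disjoint cycles of length at least $t$ yield a $C_{t,k}$ minor by contraction, so $C_{t,k}$-minor-freeness implies the hypothesis of Theorem~\ref{thm:disjointcycles}. No gap; your one-line reduction is the intended proof.
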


These results suggest the following definition. For a graph $H$, let $g(H)$ be the minimum integer for which there exists a number $c=c(H)$ such that every $g(H)$-connected $H$-minor-free graph has pathwidth at most $c$. Mader~\cite{mader} exhibited a function $\ell$ such that every $\ell(H)$-connected graph contains $H$ as a minor. 
 (Kostochka~\cite{kost1,kost2} and Thomason~\cite{thomason} independently proved that if $t=|V(H)|$ then $\ell(H)\leq\ell(K_t)\in\Theta(t\sqrt{\log t})$.)\ Thus every $H$-minor-free $\ell(H)$-connected graph has bounded pathwidth (since there is no such graph). Hence $g(H)$ is well-defined, and  $g(H)\leq \ell(H)$. We conclude with some observations about $g(H)$. 
  
For some graphs, $g(H)=\ell(H)$. For example, $g(K_5)=\ell(K_5)=6$ (since every 6-connected graph contains $K_5$ as a minor, but 5-connected planar (and thus $K_5$-minor-free) graphs have unbounded pathwidth). 
 
On the other hand,  $g(H)$ and $\ell(H)$ can be far apart. For example, we showed that $g(C_t)=2$ but $\ell(C_t)\geq t-1$ since $K_{t-1}$ is $(t-2)$-connected and contains no $C_t$-minor. 

Observe that if $H_1$ is a minor of $H_2$, then $g(H_1)\leq g(H_2)$. Thus, for each integer $c$, the class $\mathcal{H}_c:=\{H:g(H)\leq c\}$ is minor-closed. By Robertson and Seymour's graph minor theorem, for each $c$, there is a finite set of minimal excluded minors for $\mathcal{H}_c$.

Bienstock, Robertson, Seymour and Thomas~\cite{bienstock} proved that for every forest $F$, every graph with pathwidth at least $|F|-1$ contains $F$ as a minor. Thus $g(F)=0$. Moreover, since complete binary trees have unbounded pathwidth, $g(F)=0$ if and only if $F$ is a forest. And $K_3$ is the only minimal excluded minor for $\mathcal{H}_0$.

There is no graph $H$ with $g(H)=1$ since the pathwidth of a graph equals the maximum pathwidth of its connected components. 

We showed that $g(C_t)=2$ for all $t\geq3$. It is an interesting open problem to characterise the graphs $H$ with $g(H)=2$. (An answer is conjectured below.)\ 

The following example is important. Consider $G_0:=K_3$ embedded in the plane. For $i\geq 0$, construct $G_{i+1}$ from $G_i$ as follows: for each edge $vw$ on the outerface of $G_i$, add one new vertex adjacent to $v$ and $w$. Thus $G_i$ is 2-connected and outerplanar. Hence $G_i$ is $K_4$-minor-free and $K_{2,3}$-minor-free.  Observe that the dual of $G_i$ contains a complete binary tree of height $i$ as a minor, which has pathwidth $i$. By a result of Bodlaender and Fomin~\cite{bodlaender}, the class $\{G_i:i\geq 0\}$ has unbounded pathwidth. Hence $g(K_4)\geq3$ and $g(K_{2,3})\geq 3$. 

Dirac~\cite{dirac} proved that every 3-connected graph has a $K_4$-minor. Thus $g(K_4)=\ell(K_4)=3$. 

An unfinished result of Ding~\cite{ding} implies that, for some function $f$, every 3-connected $K_{2,t}$-minor-free graph has pathwidth at most $f(t)$, implying $g(K_{2,t}) \leq 3$. Thus $g(K_{2,t}) \geq  g(K_{2,3})$ and $g(K_{2,t})=3$ for $t\geq 3$  (assuming Ding's result).

We proved that $g(C_{t,k})=k+1$ for all $t\geq 3$, where the lower bound follows from the example given after the statement of Theorem~\ref{thm:disjointcycles}. This leads to the following lower bound on $g(H)$: If $H$ contains $k$ disjoint cycles, then $C_{3,k}$ is a minor of $H$, and $g(H)\geq k+1$. This observation can be strengthened as follows.  A \emph{transversal} in a graph $H$ is a set $X$ of vertices such that $H-X$ is acyclic. Let $\tau(H)$ be the minimum size of a transversal in $H$. Note that if $H$ is a minor of $G$, then $\tau(H)\leq \tau(G)$. 

\begin{proposition}
\label{prop}
$g(H)\geq \tau(H)+1$ for every graph $H$ with $\tau(H)\geq1$. 
\end{proposition}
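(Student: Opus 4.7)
The plan is to mimic the construction from the example immediately following Theorem~\ref{thm:disjointcycles}. Setting $k := \tau(H)$, the goal is to exhibit an infinite family of $k$-connected $H$-minor-free graphs with unbounded pathwidth; this forces $g(H) > k$ and hence $g(H) \geq k+1 = \tau(H)+1$.

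For each $h \geq 1$, I would let $G_h$ denote the graph obtained from the complete binary tree $T_h$ of height $h$ by adding $k-1$ dominant vertices (when $k=1$, no vertex is added, so $G_h = T_h$). Three properties need to be verified.

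First, $G_h$ is $k$-connected: removing any set $S$ of fewer than $k$ vertices either leaves at least one dominant vertex (which is adjacent to every remaining vertex, so $G_h - S$ is connected), or $S$ consists of all $k-1$ dominant vertices and no tree vertex, in which case $G_h - S = T_h$ is connected. Second, $G_h$ is $H$-minor-free: deleting the $k-1$ dominant vertices produces the acyclic graph $T_h$, so $\tau(G_h) \leq k-1$; since the paragraph just above the proposition notes that $\tau$ is minor-monotone, every minor of $G_h$ has transversal number at most $k-1 < k = \tau(H)$, and so $H$ is not a minor of $G_h$. Third, $\pw(G_h)$ is unbounded: $\pw(T_h) = \ceil{h/2}$, and each dominant vertex, being adjacent to every other vertex, must appear in every bag of any path decomposition, so $\pw(G_h) \geq \ceil{h/2} + k - 1 \to \infty$ as $h \to \infty$.

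The main obstacle is really just identifying the right construction, which the paper has already foreshadowed in its second example; after that, each of the three verifications is routine and relies only on facts (the minor-monotonicity of $\tau$ and the pathwidth of complete binary trees) that are either explicitly stated in the paper or standard. No deeper technical step is expected.
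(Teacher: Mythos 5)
Your proposal is correct and is essentially identical to the paper's proof: the same construction (complete binary tree of height $h$ plus $\tau(H)-1$ dominant vertices), the same three verifications ($\tau(H)$-connectivity, $H$-minor-freeness via minor-monotonicity of $\tau$, and unbounded pathwidth $\ceil{\tfrac{h}{2}}+\tau(H)-1$), phrased as a direct argument rather than the paper's proof by contradiction. No substantive difference.
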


\begin{proof} 
Suppose on the contrary that $g(H)\leq \tau(H)$ for some graph $H$. Let $G$ be the graph obtained from the complete binary tree of height $h$ by adding $\tau(H)-1$ dominant vertices. Then $G$ is $\tau(H)$-connected, and $\tau(G)=\tau(H)-1$, implying $G$ is $H$-minor-free. By the definition of $g(H)$, for some $c=c(H)$, the pathwidth of $G$ is at most $c$. This is a contradiction for $h>2c$, since $G$ has pathwidth $\ceil{\frac{h}{2}}+\tau(H)-1$. Therefore $g(H)\geq \tau(H)+1$. 
\end{proof}

We have described three minor-minimal graphs $H$ with $g(H)=3$. Namely, $K_4$, $K_{2,3}$ and $K_3\cup K_3$. (It is easily seen that these graphs are minor-minimal.)\ There is one more key example. Let $Q$ be the octahedron graph $K_{2,2,2}$ minus the edges of a triangle. Observe that $\tau(Q)=2$, and thus $g(Q)\geq 3$ by Proposition~\ref{prop}. Moreover, $Q$ contains no $K_4$, $K_{2,3}$ or $K_3\cup K_3$  minor. 

\begin{conjecture}
\label{conj}
The minimal excluded minors for $\mathcal{H}_2$ are $\{K_4, K_{2,3}, K_3\cup K_3, Q\}$. 
\end{conjecture}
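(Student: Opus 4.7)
The plan is to split the conjecture into two parts: (A) each of the four graphs $K_4$, $K_{2,3}$, $K_3\cup K_3$, and $Q$ is minor-minimal outside $\mathcal{H}_2$; and (B) every graph $H$ avoiding all four as minors satisfies $g(H)\leq 2$. For part (A), the lower bounds $g(H)\geq 3$ are already in the paper (via the stacked triangulations for $K_4,K_{2,3}$, Theorem~\ref{thm:newdisjointcycles} for $K_3\cup K_3$, and Proposition~\ref{prop} for $Q$); it remains to verify that every proper minor of each lies in $\mathcal{H}_2$. The proper minors of $K_4$, $K_{2,3}$, and $K_3\cup K_3$ have at most five vertices and are either forests or contain a single cycle, so they trivially lie in $\mathcal{H}_2$. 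For $Q$, I would exploit its $3$-fold symmetry (permuting the three pairs $\{a_1,a_2\}, \{b_1,b_2\}, \{c_1,c_2\}$) and the orbit distinction between degree-$2$ and degree-$4$ vertices to reduce to a short list of edge-deletion, edge-contraction, and vertex-deletion representatives, and verify in each case that the result is either a minor of $K_4$, $K_{2,3}$ or $K_3\cup K_3$, or is a 2-connected outerplanar graph with $\tau\leq 1$, and therefore in $\mathcal{H}_2$.

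For part (B), I need to show that every 2-connected graph $G$ excluding all four as minors has pathwidth bounded by a constant. I would proceed structurally in four steps. First, $K_4$-minor-freeness combined with $K_{2,3}$-minor-freeness gives outerplanarity (Chartrand--Harary); being 2-connected, $G$ has a canonical outerplanar embedding whose boundary is a Hamilton cycle and whose interior faces form a tree $T_G$ (the weak dual). Second, $K_3\cup K_3$-minor-freeness gives that $G$ has no two vertex-disjoint cycles, so Lov\'asz's 1965 theorem applies: the exceptional cases ($K_5$, $K_{3,3}$, wheels and their subdivisions) are all non-outerplanar and so do not occur, leaving a transversal $X\subseteq V(G)$ with $|X|\leq 3$ and $G-X$ acyclic. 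Third, I would use $Q$-minor-freeness to reduce to $|X|=1$: if $\tau(G)\geq 2$ with a minimal transversal containing $u$ and $v$, then there exist a cycle $C_u$ avoiding $u$ and a cycle $C_v$ avoiding $v$; the no-two-disjoint-cycles hypothesis forces $C_u$ and $C_v$ to meet outside $\{u,v\}$, and I would show that in the outerplanar 2-connected setting this configuration of three pairwise intersecting cycles through $u$, $v$, and the shared vertex produces a subdivision (hence minor) of $Q$. Fourth, with $\tau(G)=1$ and hub $v$, the tree $T=G-v$ must be a path: a vertex of degree $\geq 3$ in $T$ would force, by 2-connectivity, neighbors of $v$ in each of three components of $T$ minus that vertex, yielding three internally disjoint $v$-to-branching-vertex paths and hence a $K_{2,3}$-subdivision. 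So $G$ consists of a path $v_1\cdots v_n$ plus $v$ adjacent to a set containing $\{v_1,v_n\}$, and admits the width-$2$ path decomposition with bags $\{v,v_i,v_{i+1}\}$.

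The main obstacle is the $Q$-extraction step: proving that any 2-connected outerplanar graph with no two vertex-disjoint cycles but transversal number at least~$2$ necessarily contains $Q$ as a minor. This is the delicate structural step and will require a careful analysis of how the two intersecting cycles $C_u, C_v$ sit in the weak-dual tree $T_G$, with particular attention to the ``near-$Q$'' configurations where $G$ is only slightly larger than $Q$ itself. Once this step is in hand, the rest of the argument combines cleanly: Steps 1, 2, and 4 yield $\pw(G)\leq 3$, so $g(H)\leq 2$ for every $H$ avoiding all four graphs as minors, completing the proof of the conjecture.
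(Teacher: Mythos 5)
First, note that the statement you are proving is an open conjecture in the paper: the authors supply only the lower bounds ($g\geq 3$ for each of the four graphs) and the equivalence that the conjecture amounts to ``$g(H)\leq 2$ if and only if $H$ is outerplanar and $\tau(H)\leq1$''; they do not prove it. Your Part~(A) is fine in outline and matches what the paper already asserts. The problem is Part~(B), where there is a fundamental quantifier error. To show $g(H)\leq 2$ for a graph $H$ avoiding the four excluded minors, you must show that \emph{every 2-connected graph $G$ with no $H$-minor} has pathwidth at most some $c(H)$. Such a $G$ is in general wildly non-outerplanar and contains $K_4$, $K_{2,3}$, $K_3\cup K_3$ and $Q$ as minors: for instance, for $H=C_{100}$ (which avoids all four minors and must lie in $\mathcal{H}_2$, as the paper's Theorem~\ref{thm:2-conn} confirms), the graph $G=K_{50}$ is a 2-connected $H$-minor-free graph. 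Your Part~(B) instead analyses 2-connected graphs $G$ that \emph{themselves} exclude the four graphs as minors, and concludes $\pw(G)\leq 3$. That establishes (at best) a structure theorem for the 2-connected members of the candidate class $\mathcal{H}_2$ --- essentially re-deriving, via Chartrand--Harary and Lov\'asz, the equivalence the paper already states --- and the inference ``so $g(H)\leq 2$ for every $H$ avoiding all four graphs as minors'' is a non sequitur.

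Concretely, what remains untouched is the actual hard direction: proving that for every outerplanar $H$ with $\tau(H)\leq 1$ (for example, a fan: a path plus a dominating apex), every 2-connected graph with no $H$-minor has bounded pathwidth. This strictly generalises the paper's Theorem~\ref{thm:2-conn} (the case $H=C_t$), and none of your four steps engages with it, since all of them assume outerplanarity and near-acyclicity of the graph whose pathwidth is being bounded rather than of the excluded minor. Your $Q$-extraction step, which you flag as the main obstacle, is therefore aimed at the wrong target: even a complete and correct Part~(B) as you have framed it would not prove the conjecture.
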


It is well known that $H$ is outerplanar if and only if $H$ contains no $K_4$ or $K_{2,3}$ minor, and it follows from a result of Lov{\'a}sz~\cite{Lovasz65} that $\tau(H)\leq 1$ if and only if $H$ contains  no $K_4$, $K_3\cup K_3$ or $Q$ minor. Thus Conjecture~\ref{conj} is equivalent to saying that $g(H)\leq 2$ if and only if $H$ is outerplanar and $\tau(H)\leq 1$.

In the above  examples $H$ is planar. Planarity is significant for these types of questions since the class of $H$-minor-free graphs has bounded treewidth if and only if $H$ is planar~\cite{robertson}. However, $g(H)$ is well-defined for all graphs, and is interesting for certain non-planar graphs. For example,  B\"ohme~\emph{et~al.}~\cite{bohme08} proved that there is a function $n$ such that every 7-connected graph with at least $n(k)$ vertices contains $K_{3,k}$ as a minor. That is, every 7-connected $K_{3,k}$-minor-free graph has less than $n(k)$ vertices, implying $g(K_{3,k})\leq 7$. More generally, B\"ohme~\emph{et~al.}~\cite{bohme02} conjectured that for all $a,k$ there is an integer $n(a,k)$ such that every $(2a+1)$-connected graph on at least $n(a,k)$ vertices contains $K_{a,k}$ as a minor. This would imply that $g(K_{a,k})\leq 2a+1$.

In general, it would be interesting if some function of $\tau(H)$ was an upper bound on $g(H)$. Or is there a family of graphs $H$ with bounded transversals, but with $g(H)$ unbounded?

\section*{Notes Added in Proof}

Fiorini and Herinckx \cite{fiorini} recently improved the above-mentioned result of Birmele, Bondy and Reed~\cite{bondy} 
by showing that cycles of length at least $t$ satisfy the Erd\H{o}s-P\'{o}sa property with $f(k)=\mathcal{O}(tk\log k)$ (which is optimal for fixed $k$ or fixed $t$). It follows that the $\mathcal{O}( t^3+tk^2)$ bound in Theorem~\ref{thm:disjointcycles} can be improved to  $\mathcal{O}(t^3+tk\log k)$. 

In an early version of this paper, the graph $Q$ was omitted from Conjecture~\ref{conj}. Proposition~\ref{prop} and the importance of  $Q$ were jointly observed with J\'anos Bar\'at and Gwena\"el Joret. 

Gwena\"el Joret also pointed out the following alternative proof of a slightly weaker version of Theorem~\ref{thm:2-conn}. Let $G$ be a 2-connected graph with circumference $t$. Let $p$ be the number of edges in the longest path in $G$. Dirac~\cite{diraca} proved that $t>\sqrt{2p}$. Thus $p<\ceil{\frac{t^2}{2}}$. That is, $G$ contains no path on $\ceil{\frac{t^2}{2}}$ edges. Hence $G$ contains no path on $\ceil{\frac{t^2}{2}}$ edges as a minor. Bienstock~\emph{et~al.}~\cite{bienstock} proved that every graph that excludes a fixed forest on $k$ edges as a minor has pathwidth at most $k-1$. Thus $G$ has pathwidth at most $\ceil{\frac{t^2}{2}}-1$.  

Thanks J\'anos and Gwen.

\end{document}